\documentclass[a4paper,10pt]{amsart}

\usepackage[utf8]{inputenc}
\usepackage[top=4.5cm,bottom=4.25cm,right=3.9cm,left=3.9cm]{geometry}
\usepackage{amsmath,amssymb,amsthm}
\usepackage{url}
\usepackage[pagebackref = true]{hyperref}
\hypersetup{
colorlinks = true,
pdfstartview = FitH,
pdfpagemode = UseNone,
pdfauthor    = {Giacomo Cherubini, Pavlo Yatsyna},
pdftitle     = {Cyclic polytope of the simplest cubic fields},
pdfkeywords  = {cyclic polytope, cubic fields, lattice counting},
pdfcreator   = {Pdflatex},
linktoc = none
}

\urlstyle{sf}
\numberwithin{equation}{section}

\def\eps{\epsilon}
\def\ds{\displaystyle}

\def\ZZ{\mathbb{Z}}
\def\RR{\mathbb{R}}

\theoremstyle{plain}
\newtheorem{theorem}{Theorem}[section]
\newtheorem{lemma}[theorem]{Lemma}
\newtheorem{corollary}[theorem]{Corollary}

\title{Cyclic polytope of the simplest cubic fields}
\author{Giacomo Cherubini}
\author{Pavlo Yatsyna}

\address{
         Charles University,
         Faculty of Mathematics and Physics,
         Department of Algebra,
         Sokolov\-sk\'a 83, 18600 Praha~8,
         Czech Republic
        }

\email{
    cherubini@karlin.mff.cuni.cz\\
    yatsyna@karlin.mff.cuni.cz
    }

\date{\today}
\keywords{polytope, lattice points, simplest cubic fields}
\subjclass[2020]{Primary 11P21; Secondary 52B, 11R16, 11K38.}

\begin{document}

\begin{abstract}
In this paper, we study dilation of cyclic polytopes with the vertices defined by a generator of the simplest cubic fields. In particular, for a specific range of values, we give a precise number of the contained lattice points. 
\end{abstract}

\maketitle

\section{Introduction}
Lattice point enumeration is a much-studied problem, especially in number theory,
with the notable example being the Gauss circle problem
and various variants thereof (see for example \cite{Hux}).
For a given object in $\RR^d$, its volume is the first `good' estimate for the number of lattice points it contains. If a polytope $P$ is rational, that is, the convex hull of rational vertices, Ehrhart~\cite{ehrhart} showed that $|kP\cap\ZZ^d|$ is a quasi-polynomial \cite[Ch. 3]{BR}. No such formula is known for irrational polytopes (when at least one vertex is not rational). Yet, one can study the discrepancy between the number of lattice points and the volume of the polytope, and a considerable amount of work has been done in this direction \cite{beck,HL,matousek}. One pertinent example, if $P\subset \RR^2$ has irrational algebraic vertices, then for any $\epsilon>0$ we have $|\mathrm{area}(P)-|kP\cap \ZZ^2||\ll k^{\epsilon}$ \cite[Theorem 4]{skr1}.

We shall focus on irrational algebraic polytopes, specifically, cyclic polytopes whose vertices correspond to a generator of the simplest cubic fields. Given a moment curve $\psi_d:\RR\longrightarrow \RR^d$ which maps
$r\mapsto (r,r^2,\ldots,r^d)$, the cyclic polytope on $n$ points
is the polytope whose vertices are $\psi_d(\rho_1),\ldots,\psi_d(\rho_n)$ for some $\rho_i\in\RR$ \cite[p. 97]{matousek}. In our case, we take $\rho_i$ as the roots of the polynomial:
\begin{equation}\label{f_a}
f_a(x) = x^3-ax^2-(a+3)x-1,
\end{equation}
for
$a\ge 1$, $a\in \ZZ$.
If we denote by $\mathcal{T}_a$ the cyclic polytope with vertices
$\psi_2(\rho_i)$, we show that the difference between $k\mathcal{T}_a\cap\ZZ^2$
and the volume of $k\mathcal{T}_a$ exhibits a regular behaviour for $k\leq a$.

\begin{theorem}\label{thm2}
Let $a\geq 1$ and $k\leq a$.
Let $\mathcal{T}_a$ be the cyclic polytope with vertices $\psi_2(\rho_i)$,
where $\psi_2$ and $\rho_i$ are as above. Then
\[
|k\mathcal{T}_a\cap \ZZ^2| = \frac{k^2(a^2+3a+9)}{2} + \theta(k),
\quad
\theta(k)=
\begin{cases}
1 & k\text{ even},\\
-1/2 & k \text{ odd}.
\end{cases}
\]
\end{theorem}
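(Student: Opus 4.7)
The plan is to count the lattice points in $k\mathcal{T}_a$ column by column, using the specific arithmetic of the roots $\rho_\ell$ of $f_a$.

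First, by Vieta's ($\rho_i+\rho_j = a-\rho_\ell$ and $\rho_i\rho_j = 1/\rho_\ell$ from $\rho_1\rho_2\rho_3=1$), the edge of $k\mathcal{T}_a$ opposite the vertex $(k\rho_\ell, k\rho_\ell^2)$ has equation $L_\ell(x) = (a-\rho_\ell)x - k/\rho_\ell$. The identity $1/\rho_\ell = \rho_\ell^2 - a\rho_\ell - (a+3)$, coming from $f_a(\rho_\ell)=0$, puts $k/\rho_\ell$ in $\ZZ[\rho_\ell]$, so for any integer $m$ one has $\lfloor L_\ell(m)\rfloor = am + \lfloor\phi_\ell(m)\rfloor$ with $\phi_\ell(m) = -m\rho_\ell - k/\rho_\ell$.

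Second, testing the signs of $f_a$ on suitable intervals gives the bounds $\rho_1 \in (a+1, a+1+2/a)$, $\rho_2 \in (-1/a, 0)$, $\rho_3 \in (-1-1/a, -1)$; in particular $f_a(-1/a)>0$, $f_a(0)<0$, $f_a(-1)>0$, $f_a(-1-1/a)<0$, and $f_a(a+1)<0$, $f_a(a+1+2/a)>0$. Under $k\leq a$, these force $k\rho_2\in(-1,0)$ and $k\rho_3\in(-k-1,-k)$, so the integer abscissae in $k\mathcal{T}_a$ are exactly $\{-k,\ldots,M\}$ with $M = \lfloor k\rho_1\rfloor \in \{k(a+1), k(a+1)+1\}$, and $\sum_\ell\{k\rho_\ell\} = ka - M + k + 2 \in \{1,2\}$. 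Splitting the lower edge at $m=0$ (where $V_2$ lies in the strip $-1<x<0$), the count becomes
\[
|k\mathcal{T}_a\cap\ZZ^2| = \sum_{m=-k}^{-1}\bigl(\lfloor\phi_2(m)\rfloor - \lfloor\phi_1(m)\rfloor\bigr) + \sum_{m=0}^{M}\bigl(\lfloor\phi_2(m)\rfloor - \lfloor\phi_3(m)\rfloor\bigr).
\]

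The decisive input is the identity $\sum_\ell \phi_\ell(m) = k(a+3) - am \in \ZZ$ (from $\sum_\ell 1/\rho_\ell = -(a+3)$), which forces $\{\phi_1(m)\}+\{\phi_2(m)\}+\{\phi_3(m)\}$ to be an integer in $\{0,1,2\}$ for every $m$. Summing the linear pieces of the $\phi_\ell$ across the two ranges should telescope, via Vieta's, to the area $\frac{k^2(a^2+3a+9)}{2}$. The main obstacle is the residual fractional-part sum: I expect the proof to split into cases according to whether $M = k(a+1)$ or $M = k(a+1)+1$, and within each case on the parity of $k$, using the integrality constraints $\sum_\ell\{k\rho_\ell\}\in\{1,2\}$ and $\sum_\ell\{\phi_\ell(m)\}\in\{0,1,2\}$ to collapse the surviving contribution to exactly $\theta(k)$.
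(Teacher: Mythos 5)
Your setup coincides with the paper's: slice $k\mathcal{T}_a$ into vertical columns $x=m$, use Vieta's relations to write each edge as $am$ plus a quantity $\phi_\ell(m)$ depending on a single root, and reduce the column count to a difference of floors (your labelling of the roots is a permutation of the paper's, which is immaterial; your bracketing of the roots by sign changes of $f_a$ and your identification of the range of admissible $m$ are also correct). The gap is that the proposal stops exactly where the real work begins. Each column contributes $\lfloor\phi_2(m)\rfloor-\lfloor\phi_1(m)\rfloor$ (resp.\ $\lfloor\phi_2(m)\rfloor-\lfloor\phi_3(m)\rfloor$), a difference of two \emph{individual} floors, whereas your ``decisive input'' $\sum_\ell\phi_\ell(m)\in\ZZ$ only constrains the \emph{sum} $\{\phi_1(m)\}+\{\phi_2(m)\}+\{\phi_3(m)\}\in\{1,2\}$: one linear relation among three unknown fractional parts per column, with the right-hand side itself undetermined. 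Case-splitting on $M$ and on the parity of $k$ cannot extract $\theta(k)$ from this relation alone; one must actually evaluate $\lfloor\phi_\ell(m)\rfloor$ for at least two of the three edges at every $m$ in the range.

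That evaluation is the technical core of the paper (Lemmas \ref{lemma:topline}, \ref{lemma:leftline} and \ref{lemma:rightline}): each $\lfloor\ell_{ij}(n)\rfloor$ is an explicit staircase in $n$ with steps of length $a+2$ (top edge) or $a+1$ (right edge), and the exact positions of the jumps are what produce the constant $\theta(k)$ after summation. Locating those jumps requires knowing $\delta=\rho_3-(a+1)$ to accuracy $O(a^{-3})$, which is the point of Lemma \ref{lemma:bounds} ($\frac{4}{2a+3}-\frac{7}{2a^3}<\delta<\frac{4}{2a+3}-\frac{3}{a^3}$). Your bracketing $\rho\in(a+1,a+1+2/a)$ determines $k\delta$, and hence the number and placement of staircase steps, only up to an error of order $1$; an error of a single step already shifts the lattice-point count by $O(k)$, so the target formula cannot be reached from these bounds. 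In short: the plan of attack is the paper's, but it is missing its engine --- the sharp root approximation and the explicit evaluation of the three floor-staircases.
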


The simplest cubic fields are the splitting fields of $f_a$ and were introduced by Shanks in \cite{shanks}
with the goal of finding number fields with a large class number.
Since then, this family became a ``testing ground'' for many investigations of cubic number
fields \cite{KM,LP,wash}.
They have several favorable properties: the discriminant is a perfect square,
expressed by $\Delta=(a^2+3a+9)^2$; the field is monogenic for infinitely many values of $a$;
the roots of $f_a$ are given by a simple explicit formula (see \eqref{exactformula})
and converge to rational integers when $a$ is large; and the regulator is explicitly known
(and relatively small, thus giving a large class number).
In relation to the cyclic polytope $\mathcal{T}_a$,
we mention that its volume is given by
\[
\mathrm{vol}(\mathcal{T}_a) = \frac{1}{2}\sqrt{\Delta} = \frac{a^2+3a+9}{2}
\]
and therefore $\theta(k)$ in Theorem \ref{thm2} measures the discrepancy between the number of lattice points
and the volume of $k\mathcal{T}_a$, as desired.

For $k=1,2$, Theorem \ref{thm2} was already known (\cite[Exemple 1]{cohen} and \cite[Theorem~4.4]{KK})
primarily for applications to finding the value of the Dedekind zeta function evaluated at negative integers.
Note that for $a\not\equiv 0\pmod{3}$, $f_a$ is equivalent to the polynomial in Cohen's Exemple 1 in \cite{cohen}:
$a \equiv \pm 1 \pmod{3}$, then letting $e=a^2+3a+9$ and $u=\pm 2a\pm 3$ we get
$\pm f_a(\pm x+\frac{a\mp 1}{3})$.

The number $|P\cap\ZZ^2|$ can be used to bound the ranks of the universal quadratic forms defined over the number field \cite{pavlo}, yet, the knowledge of $|kP\cap \ZZ^2|$ can be applied to sharpen those bounds (as recently was applied in \cite{KM}).

The cyclic polytopes corresponding to algebraic integers are not well studied. It is know that if such polytopes contain at least a single lattice point, then their volume is bounded from below, but there exist empty polytopes exceeding this bound. Furthermore, there is an infinite family of polynomials for which the corresponding cyclic polytope is empty. In particular, let $\zeta_n$ be the $n$th root of unity. Then for $n=p^l,$ where $p$ is an odd prime number, the cyclic polytope related to $\zeta_n+\zeta^{-1}_n$ contains exactly $\frac{p-1}{2}$ lattice points \cite[Theorem 1.1]{MS}. It is a polytope in dimension $\frac{\phi(n)}{2}$. On the other hand, for a square-free $n>20$, $n\not=30,~p$ or  $2p$, then the cyclic polytope related to $\zeta_n+\zeta^{-1}_n$ is empty \cite[Theorem 2]{pavlo}. In general, for non-prime dimensions, it is not known whether the number of empty (nonequivalent) polytopes is finite. But for dimensions $2,3,4,5$ and $6$, polytopes corresponding to irreducible monic polynomials are not pointless (see \cite{pavlo} for all the above).  

To prove Theorem \ref{thm2} we apply the approach of Skriganov \cite[\S 6]{skr1},
but gain slight improvements by using more precise bounds for our case, see Lemma \ref{lemma:bounds}.

In the case of real quadratic fields, for example when $\mathbb{Q}(\sqrt{D})$ with $D\equiv 2,3 \pmod{4}$,
we get two roots of the associated polynomial, say $\alpha_{1,2}=m\pm n\sqrt{D}$, where $m,n \in\ZZ$.
The corresponding cyclic polytope (of dimension one)
becomes a line $\mathcal{L}$ between $\alpha_1$ and~$\alpha_2$, and counting lattice points in $k\mathcal{L}$
reduces to studying
the homogeneous Beatty sequence $\lfloor kn\sqrt{D}\rfloor$.

In closing, we mention that Theorem \ref{thm2} can be expressed in terms of
counting integral polynomials interlacing $f_a$ (see \cite[Definition 2.1]{pavlo} for the definition of interlacing).
We state this application as a corollary.
\begin{corollary}
Let $a\geq 1$ and $k\leq a$. Then the number of polynomials $g(x)=kx^2+n_1x+n_2\in\ZZ[x]$
interlacing $f_a$ is given by the quantity on the right in Theorem~\ref{thm2}.
\end{corollary}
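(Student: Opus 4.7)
The plan is to construct an explicit unimodular integer affine bijection between $k\mathcal{T}_a$ and the polytope of coefficient vectors $(n_1,n_2)\in\RR^2$ whose associated quadratic interlaces $f_a$; the corollary then follows directly from Theorem~\ref{thm2}.

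First, I would rephrase interlacing as three linear inequalities. Let $\rho_1<\rho_2<\rho_3$ be the real roots of $f_a$. Since $f_a(\pm 1)\neq 0$, the rational root theorem implies that $f_a$ is irreducible, so each $\rho_i$ is a cubic irrational. Because $g(x)=kx^2+n_1x+n_2$ has positive leading coefficient and exact degree~$2$, $g$ interlaces $f_a$ exactly when
\[
g(\rho_1)\geq 0,\quad g(\rho_2)\leq 0,\quad g(\rho_3)\geq 0;
\]
for $(n_1,n_2)\in\ZZ^2$ these are automatically strict, since $\rho_i$ is not a root of any nonzero integer polynomial of degree~$\leq 2$. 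The feasible region is therefore an open triangle $\Delta\subset\RR^2$ whose vertices, obtained by pairwise intersection of the three bounding lines, are
\[
w_{ij}=\bigl(-k(\rho_i+\rho_j),\;k\rho_i\rho_j\bigr),\qquad \{i,j\}\subset\{1,2,3\},
\]
so the number of interlacing polynomials equals $|\Delta\cap\ZZ^2|$.

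Next, I would exhibit a map $T\in\mathrm{SL}_2(\ZZ)\ltimes\ZZ^2$ sending $k\mathcal{T}_a$ onto $\Delta$. The candidate is
\[
T(X,Y)=\bigl(X-ka,\;Y-aX-k(a+3)\bigr),
\]
whose linear part is a unimodular shear with integer entries and whose translation lies in $\ZZ^2$, so $T$ restricts to a bijection of $\ZZ^2$. Using the identity $\rho_\ell^2-a\rho_\ell-(a+3)=1/\rho_\ell$ derived from $f_a(\rho_\ell)=0$, together with Vieta's relations $\rho_1+\rho_2+\rho_3=a$ and $\rho_1\rho_2\rho_3=1$, a short computation gives
\[
T(k\rho_\ell,\,k\rho_\ell^2)=\bigl(k\rho_\ell-ka,\;k/\rho_\ell\bigr)=w_{ij}\quad\text{whenever}\quad \{i,j,\ell\}=\{1,2,3\},
\]
so $T$ matches vertices with vertices and therefore $T(k\mathcal{T}_a)=\Delta$.

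Combining these two steps, $|k\mathcal{T}_a\cap\ZZ^2|=|\Delta\cap\ZZ^2|$ equals the number of integer polynomials $g(x)=kx^2+n_1x+n_2$ interlacing $f_a$, and Theorem~\ref{thm2} supplies the explicit value. The only non-mechanical ingredient is finding the shear $T$; once the vertex correspondence $v_\ell\mapsto w_{ij}$ is written down, $T$ is essentially forced by the integrality of its linear part, so I do not expect a substantive obstacle.
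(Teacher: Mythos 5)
Your proposal is correct and complete. Note that the paper itself gives no proof of this corollary: it is stated as an application, implicitly relying on the general correspondence between lattice points of the cyclic polytope of an algebraic integer and integral interlacing polynomials established in \cite{pavlo}. Your argument makes that correspondence explicit and self-contained for the case at hand, and all the ingredients check out: $f_a(1)=-2a-3$ and $f_a(-1)=1$ are nonzero, so $f_a$ is an irreducible cubic and each $\rho_i$ generates a cubic field, which both justifies the strictness of the sign conditions for integer $(n_1,n_2)$ and shows that no lattice point lies on the boundary of either triangle (so open versus closed is immaterial); the reformulation of interlacing as $g(\rho_1)>0>g(\rho_2)$, $g(\rho_3)>0$ is valid for a quadratic with positive leading coefficient against a cubic with three distinct real roots; and the shear $T(X,Y)=(X-ka,\,Y-aX-k(a+3))$ does send $(k\rho_\ell,k\rho_\ell^2)$ to $\bigl(k(\rho_\ell-a),\,k/\rho_\ell\bigr)=\bigl(-k(\rho_i+\rho_j),\,k\rho_i\rho_j\bigr)$ by Vieta and the identity $\rho_\ell^2-a\rho_\ell-(a+3)=1/\rho_\ell$, with unimodular integral linear part and integral translation. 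What your route buys is an elementary, verifiable bijection that does not require quoting \cite{pavlo}; what the paper's (implicit) route buys is brevity and placement of the corollary within the general framework where this polytope--interlacing dictionary holds in every degree. One stylistic caveat: since the paper cites \cite[Definition 2.1]{pavlo} for interlacing, you should confirm that your three-inequality formulation matches that definition (it does, up to the strict/non-strict distinction, which your irrationality remark resolves).
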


\subsection*{Acknowledgment}
This work was supported by the project PRIMUS/20/SCI/002 from Charles University.

\section{Outline of the proof}

In order to prove Theorem \ref{thm2}
we observe that counting the points in $k\mathcal{T}_a$
can be done by slicing the triangle along vertical lines
corresponding to points $(x,y)\in\ZZ^2$ with
$x$ lying above a fixed integer $n$,
counting the integers points of $k\mathcal{T}_a$ on each line
and then summing over all lines.
Following this strategy, we will count
carefully the points in
$k\mathcal{T}_a\cap\ZZ^2$ along every single vertical line.

\subsection{Approximation of the roots}
A key input in our proof is a good estimate on
$\rho_1,\rho_2,\rho_3$.
A quick numerical check shows that $\rho_1\approx -1$,
$\rho_2\approx 0$ and $\rho_3\approx a+1$.
This is true in general:
in \cite[(8)]{shanks} Shanks showed that,
on setting $P=a^2+3a+9$, we have the exact formula
\begin{equation}\label{exactformula}
\rho_3 = \frac{1}{3}(a+\sqrt{P}\cos\varphi),
\quad
\varphi = \frac{1}{3}\arctan\frac{\sqrt{27}}{2a+3},
\end{equation}
from which one can deduce that for large values of $a$ we have
\begin{equation}\label{2010:eq001}
\rho_3 = a+1+\frac{4}{2a+3} + O\left(\frac{1}{a^3}\right).
\end{equation}
The location of the other roots follows
from \eqref{2010:eq001} and the relation (see e.g.~\cite[(5)]{shanks})
\begin{equation}\label{relation}
\rho_1 = -\frac{1+\rho_3}{\rho_3}
\quad\text{and}\quad
\rho_2 = -\frac{1}{1+\rho_3}.
\end{equation}
In particular we see that $\rho_1\in(-2,-1)$
and $\rho_2\in(-1,0)$, at least for large $a$.
We make the approximation in \eqref{2010:eq001} more precise by proving the following
result in which we give explicit upper and lower bounds on the error.

\begin{lemma}\label{lemma:bounds}
Let $a\geq 34$.
Let $f_a$ be as in \eqref{f_a} and let $\rho_3$ be its largest root. Then
\begin{equation}\label{rho3bounds}
\rho_3 = a+1+\delta,
\quad\text{with}\quad
\frac{4}{2a+3} -\frac{7}{2a^3} < \delta < \frac{4}{2a+3}-\frac{3}{a^3}.
\end{equation}
\end{lemma}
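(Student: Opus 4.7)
The plan is to substitute $\rho_3 = a+1+\delta$ directly into $f_a$ and reduce the lemma to checking two sign conditions on a single cubic. Expanding $f_a(a+1+\delta)$ term by term, I would obtain
\[
g(\delta) := f_a(a+1+\delta) = -(2a+3) + (a^2+3a)\delta + (2a+3)\delta^2 + \delta^3.
\]
All non-constant coefficients are positive, so $g$ is strictly increasing on $[0,\infty)$; together with $g(0) = -(2a+3) < 0$ and $g(1) = a^2+3a+1 > 0$, this identifies $\delta = \rho_3 - (a+1)$ as the unique positive root of $g$. By monotonicity, the bounds in \eqref{rho3bounds} are equivalent to
\[
g\!\left(\tfrac{4}{2a+3}-\tfrac{7}{2a^3}\right) < 0 < g\!\left(\tfrac{4}{2a+3}-\tfrac{3}{a^3}\right).
\]

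The key algebraic input I would exploit is the identity $(2a+3)^2 = 4(a^2+3a) + 9$, which rewrites $(a^2+3a) \cdot \frac{4}{2a+3} = (2a+3) - \frac{9}{2a+3}$ and produces a clean cancellation of the $-(2a+3)$ constant in $g$ against the dominant part of $(a^2+3a)\delta$. Writing $\delta = \frac{4}{2a+3} + \varepsilon$, this simplifies $g$ to
\[
g\!\left(\tfrac{4}{2a+3}+\varepsilon\right) = \tfrac{7}{2a+3} + (a^2+3a+8)\varepsilon + (2a+3)\varepsilon^2 + \bigl(\tfrac{4}{2a+3}+\varepsilon\bigr)^3.
\]
For $\varepsilon = -3/a^3$ the first two terms collapse to $\frac{a^2-27a-27}{a^2(2a+3)}$, positive of order $1/a$ once $a^2 - 27a - 27 > 0$; for $\varepsilon = -7/(2a^3)$ they collapse to $-\frac{21}{2a(2a+3)} - \frac{21}{2a^2} - \frac{28}{a^3}$, manifestly negative of order $1/a^2$. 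The remaining contributions $(2a+3)\varepsilon^2$ and $(\tfrac{4}{2a+3}+\varepsilon)^3$ are positive and of size $O(1/a^3)$, hence dominated by the main part.

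The hard part will be arithmetic bookkeeping rather than anything conceptual: both target bounds share the leading behaviour $\frac{4}{2a+3}$ and differ only in $O(1/a^3)$ corrections, so the proof hinges on controlling lower-order terms precisely. The hypothesis $a \geq 34$ is forced by the upper bound computation: although $a^2 - 27a - 27 > 0$ already for $a \geq 28$, one further needs the main part to dominate the correction $\tfrac{24}{a^3} + (\tfrac{4}{2a+3}+\varepsilon)^3 \approx \tfrac{32}{a^3}$, and a direct check shows this happens with a clear margin from $a = 34$ onward, securing positivity for all larger $a$ as well.
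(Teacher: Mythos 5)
Your proposal is correct and takes essentially the same route as the paper, whose entire proof is to verify $f_a\bigl(a+1+\tfrac{4}{2a+3}-\tfrac{7}{2a^3}\bigr)<0<f_a\bigl(a+1+\tfrac{4}{2a+3}-\tfrac{3}{a^3}\bigr)$ and invoke continuity; your substitution $g(\delta)=f_a(a+1+\delta)=\delta^3+(2a+3)\delta^2+(a^2+3a)\delta-(2a+3)$, the monotonicity of $g$ on $[0,\infty)$, and the identity $(2a+3)^2=4(a^2+3a)+9$ just make the paper's ``easy to verify'' step explicit (and in fact handle more cleanly why the root located is the \emph{largest} one). The only blemish is a bookkeeping wobble around the $8\varepsilon$ term --- the displayed ``first two terms'' at $\varepsilon=-3/a^3$ actually equal $\tfrac{a^2-27a-27}{a^2(2a+3)}-\tfrac{24}{a^3}$ rather than $\tfrac{a^2-27a-27}{a^2(2a+3)}$ --- but you reinstate the $24/a^3$ in your final paragraph and your (conservative) sufficient condition does hold for all $a\geq 34$, so the argument closes.
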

\begin{proof}
It is easy to verify that for $a\geq 34$ we have
\[
f_a\left(a+1+\frac{4}{2a+3}-\frac{7}{2a^3}\right)
<0<
f_a\left(a+1+\frac{4}{2a+3}-\frac{3}{a^3}\right).
\]
The result follows from the continuity of $f_a$.
\end{proof}

Since the above lemma requires $a\geq 34$, we will prove Theorem \ref{thm2}
under this assumption. A computer check confirms
that the result holds for $1\leq a\leq 33$.

For $i=1,2,3$, let us denote by $P_i$
the vertices of $\mathcal{T}_a$, that is, $P_i=\psi_2(\rho_i)=(\rho_i,\rho_i^2)$. Also, let $\ell_{ij}$ be the
line connecting $kP_i$ and $kP_j$.
By a direct computation we can write
\begin{equation}
\ell_{ij}(x) = (\rho_i+\rho_j)x-k\rho_i\rho_j.
\end{equation}
Notice that by construction the triangle $k\mathcal{T}_a$
lies in the upper half of the plane $\RR^2$.
The above discussion about the roots shows that we have
\begin{equation}\label{floordiff}
|k\mathcal{T}_a\cap \ZZ^2\cap\{x=n\}|
=
\begin{cases}
\lfloor\ell_{13}(n)\rfloor - \lfloor\ell_{12}(n)\rfloor & \text{if }n\leq -1\\
\lfloor\ell_{13}(n)\rfloor - \lfloor\ell_{23}(n)\rfloor & \text{if }n\geq 0,\\
\end{cases}
\end{equation}
for each integer~$n$ such that the intersection is non-empty.
In what follows we show an exact formula for $\lfloor\ell_{ij}(n)\rfloor$ and obtain Theorem \ref{thm2}
in Section \ref{S3} by using \eqref{floordiff} and by summing over all $n$ such that $k\mathcal{T}_a$ has a point lying above $n$.
As a first step, let us use Lemma \ref{lemma:bounds}
to determine for which $n$ the line $\{x=n\}$ intersects $k\mathcal{T}_a$.

\begin{lemma}
Let $a\geq 34$, $k\leq a$ and $(x,y)\in k\mathcal{T}_a\cap \ZZ^2$.
Then $-k\leq x\leq k(a+1)+2$. If $k\leq\frac{a}{2}$ then $x\leq k(a+1)+1$.
\end{lemma}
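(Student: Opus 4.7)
The plan is to observe that $k\mathcal{T}_a$ is the convex hull of the three points $kP_i=(k\rho_i,k\rho_i^2)$, so any $(x,y)\in k\mathcal{T}_a$ has its $x$-coordinate lying in the closed interval between $\min_i k\rho_i$ and $\max_i k\rho_i$. Since Lemma~\ref{lemma:bounds} ensures $\rho_3>a+1$ (the error term $4/(2a+3)-7/(2a^3)$ is positive for $a\ge 34$), relation~\eqref{relation} gives $\rho_1=-1-1/\rho_3\in(-2,-1)$ and $\rho_2=-1/(1+\rho_3)\in(-1,0)$. Hence the relevant interval is $[k\rho_1,k\rho_3]$, and the task reduces to converting the analytic bounds on $\rho_1$ and $\rho_3$ into sharper integer bounds.

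For the lower bound, I would use $\rho_3>a+1$ in the identity $\rho_1=-1-1/\rho_3$ to obtain $\rho_1>-1-1/(a+1)$, so $k\rho_1>-k-k/(a+1)$. The hypothesis $k\le a<a+1$ makes $k/(a+1)$ strictly less than $1$, which yields $k\rho_1>-k-1$. Any integer $x\ge k\rho_1$ therefore satisfies $x\ge -k$.

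For the upper bound, the inequality $\rho_3<a+1+4/(2a+3)$ from Lemma~\ref{lemma:bounds} gives $k\rho_3<k(a+1)+4k/(2a+3)$. In the general case $k\le a$ one has $4k/(2a+3)\le 4a/(2a+3)=2-6/(2a+3)<2$, so $k\rho_3<k(a+1)+2$ and any integer $x\le k\rho_3$ satisfies $x\le k(a+1)+1\le k(a+1)+2$. In the refined case $k\le a/2$ the same estimate becomes $4k/(2a+3)\le 2a/(2a+3)<1$, so $k\rho_3<k(a+1)+1$ and $x\le k(a+1)\le k(a+1)+1$. The only obstacle is a bookkeeping one: one needs the slack in each inequality to be strictly less than a unit so that passing from a real to an integer bound costs nothing, and the error terms in Lemma~\ref{lemma:bounds} are calibrated precisely to provide this slack.
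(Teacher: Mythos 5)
Your proposal is correct and follows essentially the same route as the paper: both bound $k\rho_1$ via the relation $\rho_1=-(1+\rho_3)/\rho_3$ and bound $k\rho_3$ via the upper estimate $\delta<4/(2a+3)$ from Lemma~\ref{lemma:bounds}, then pass to integers. The arithmetic details differ only cosmetically (the paper phrases the upper bound as $k\delta<b$ with $b\in\{1,2\}$), so nothing further is needed.
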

\begin{proof}
Write $b=1,2$ according to whether $k\leq \frac{a}{2}$ or $\frac{a}{2}<k\leq a$.
It suffices to show
\begin{equation}\label{2509:eq001}
-k-1<k\rho_1<-k
\quad\text{and}\quad
k(a+1)<k\rho_3<k(a+1)+b.
\end{equation}
First, by Lemma \ref{lemma:bounds} we immediately see that $\rho_3>a+1$ and $\rho_1<-1$,
therefore $k\rho_3>k(a+1)$ and $k\rho_1<-k$, proving two of the inequalities in \eqref{2509:eq001}.
Concerning the first inequality in \eqref{2509:eq001}, again by Lemma \ref{lemma:bounds} we have
\[
-k-1<k\rho_1\iff -1<-\frac{k}{a+1+\delta}\iff k<a+1+\delta,
\]
which holds since $\delta>0$. Finally,
we can write by \eqref{rho3bounds}
\[
k\rho_3<k(a+1)+b
\iff
k\delta < b
\impliedby
k \leq \frac{(2a+3)b}{4},
\]
which proves the last inequality in \eqref{2509:eq001}.
\end{proof}

\subsection{Top line}
As we have explained, the triangle $k\mathcal{T}_a$ lies in the upper half of $\RR^2$.
More precisely, it is composed of a top edge
which crosses the two quadrants in the upper half plane,
an edge fully contained in the second quadrant ($x<0<y$),
and an edge almost entirely contained~in~the first quadrant ($x,y>0$).
We start by discussing the edge on the top of the triangle,
corresponding to the line $\ell_{13}$.

\begin{lemma}\label{lemma:topline}
Let $a\geq 34$ and $k\leq a$. Define the quantities
\[
g = a+2,
\quad
C = kg,
\quad
n_* = a+1-2k,
\quad
n^* = n_*+g\lfloor k/2\rfloor +1.
\]
Then we have
\[
\lfloor\ell_{13}(n)\rfloor
=
\begin{cases}
an + C & -k\leq n \leq n_{*}\\
an + C + i & (i-1)g + 1 \leq n-n_* \leq ig,\; 1\leq i\leq\lfloor k/2\rfloor\\
an + C + \lfloor k/2\rfloor & n=n^{*}\\
an + C + \lfloor k/2\rfloor + j & (j-1)g + 1 \leq n-n^* \leq jg,\;1\leq j\leq\lceil k/2\rceil-1\\
an + C + k & n^{*}+(\lceil k/2\rceil-1)g+1\leq n\leq k(a+1)+\mathbf{1}_{\{k>a/2\}}.
\end{cases}
\]
\end{lemma}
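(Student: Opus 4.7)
My plan is to exploit Vieta's formulas for $f_a$ and \eqref{relation} to rewrite $\ell_{13}$ in terms of the single quantity $g':=1+\rho_3 = g+\delta$. Since $\rho_1+\rho_3 = a-\rho_2 = a+1/g'$ and $\rho_1\rho_3 = 1/\rho_2 = -g'$, this gives
\[
\ell_{13}(n) = an + \frac{n}{g'} + kg' = an + C + h(n),
\qquad h(n) := \frac{n}{g'} + k\delta.
\]
Because $an+C\in\ZZ$ and $h$ is linear and strictly increasing, $\lfloor h(n)\rfloor = m$ iff $\lceil\phi(m)\rceil\le n<\lceil\phi(m+1)\rceil$, where $\phi(m) := (m-k\delta)g' = mg' - s$ and $s:=k\delta g'$.

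Setting $\epsilon := s-2k = k(\delta g'-2)$, the identity $n_* = g-1-2k$ yields $\phi(m)-\bigl(n_*+(m-1)g\bigr) = m\delta - \epsilon + 1$. Hence the five cases of the lemma become the statement that this quantity lies in $(0,1]$ for $1\le m\le M:=\lfloor k/2\rfloor$ and in $(1,2]$ for $M+1\le m\le k$, which is in turn equivalent to the single identity
\[
\lfloor \epsilon/\delta\rfloor = M,
\]
together with the mild bounds $M\delta<1$ and $\lceil k/2\rceil\,\delta<1$ (both immediate from $k\le a$ and $\delta<4/(2a+3)$) and the endpoint inequalities $h(-k)\ge 0$ and $h(k(a+1)+\mathbf{1}_{k>a/2}) < k+1$.

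The main calculation, and the main obstacle, is the identity $\lfloor\epsilon/\delta\rfloor = \lfloor k/2\rfloor$. Starting from $f_a(a+1+\delta)=0$, a short expansion produces the compact relation
\[
\delta\,(a+\delta)(a+\delta+3) = 2a+3,
\]
which, substituted into $\epsilon/\delta = k[(a+2+\delta)-2/\delta]$, simplifies to
\[
\frac{\epsilon}{\delta} = \frac{k}{2} + \frac{k\bigl(1+2\mu-4\delta^2\bigr)}{2(2a+3)},
\qquad \mu := 4-(2a+3)\delta.
\]
From Lemma \ref{lemma:bounds} one has $\mu\in(6/a^2 + 9/a^3,\, 7/a^2 + 21/(2a^3))$ and $\delta^2 < 16/(2a+3)^2$, and a routine check then shows the correction term lies in $(0,1/2)$ for $a\ge 34$ and $1\le k\le a$. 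This forces $\lfloor\epsilon/\delta\rfloor = \lfloor k/2\rfloor$ regardless of the parity of $k$.

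The endpoint checks are routine: $h(-k) = k(\delta g'-1)/g'\ge 0$ reduces to $\delta g'\ge 1$, itself a direct consequence of $(a+2)\delta>2$ via Lemma \ref{lemma:bounds}; and
\[
h\bigl(k(a+1)+\mathbf{1}_{k>a/2}\bigr) - k = \bigl[k(1-\delta) + \mathbf{1}_{k>a/2} + \epsilon\bigr]/g' < 1
\]
follows from the crude numerator bound $k(1-\delta)+\mathbf{1}_{k>a/2}+\epsilon < k+2 \le a+2 < g'$.
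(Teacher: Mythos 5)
Your proposal is correct, and I checked the key computations: the expansion $f_a(a+1+\delta)=\delta^3+(2a+3)\delta^2+a(a+3)\delta-(2a+3)$ does give $\delta(a+\delta)(a+3+\delta)=2a+3$, the resulting identity $\epsilon/\delta=\frac{k}{2}+\frac{k(1+2\mu-4\delta^2)}{2(2a+3)}$ is exact, and the correction term does lie in $(0,1/2)$ under Lemma \ref{lemma:bounds} for $a\geq 34$, which forces $\lfloor\epsilon/\delta\rfloor=\lfloor k/2\rfloor$ for either parity. Your route, however, is genuinely different from the paper's. Both proofs start from the same decomposition $\ell_{13}(n)=an+C+\xi$ with $\xi=n/(a+2+\delta)+k\delta$ (your $h$ is the paper's $\xi$), but the paper then verifies $\xi\in(i,i+1)$ separately on each of the five ranges, reducing each case to a pair of sufficient inequalities in $k$ and checking that certain rational functions of $\delta$ exceed $a$ throughout $(\delta_-,\delta_+)$. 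You instead invert the strictly increasing linear function, so that the whole case analysis collapses to locating $\lceil\phi(m)\rceil$ for $1\le m\le k$, and hence to the single floor identity $\lfloor\epsilon/\delta\rfloor=\lfloor k/2\rfloor$ together with two trivial bounds and two endpoint checks. The paper's approach is a uniform, if repetitive, verification that needs only the interval bounds of Lemma \ref{lemma:bounds}; yours additionally exploits the exact cubic relation satisfied by $\delta$, which is what isolates the structural reason the offset in the breakpoints changes precisely at $i=\lfloor k/2\rfloor$ (namely $\epsilon/\delta=k/2+{}$small positive). That makes your argument shorter and more explanatory, at the cost of the one extra algebraic identity; it would also adapt directly to Lemma \ref{lemma:rightline}, where the paper repeats the same case-by-case scheme.
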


\begin{proof}
By Lemma \ref{lemma:bounds}
and recalling that $\rho_1+\rho_2+\rho_3=a$
and $\rho_1\rho_2\rho_3=1$,
we can write
$\ell_{13}(n) = an+C + \xi$, where
\[
\xi = \xi(a,n,k) = \frac{n}{a+2+\delta} + k\delta
\]
and $\delta$ is as in \eqref{rho3bounds}.
Denote by $\delta_{\pm}$ the
endpoints in the range for $\delta$ in \eqref{rho3bounds},
so that $\delta\in(\delta_{-},\delta_{+})$.
Let $-k\leq n\leq n_*$ and let us prove that $\xi\in(0,1)$.
By \eqref{relation} we have
\begin{equation}\label{0510:eq001}
0<\xi<1
\impliedby
\begin{cases}
k(\delta(a+2+\delta)-1)>0,\\
\ds k<\frac{1+\delta}{\delta(a+2+\delta)-2}.\rule{0pt}{18pt}
\end{cases}
\end{equation}
Since $\delta(a+2+\delta)>2$ for every $\delta\in(\delta_{-},\delta_{+})$
and the fraction in the second line
is greater than $a$ for all $\delta\in(\delta_{-},\delta_{+})$,
both inequalities hold for $k\leq a$.

Consider now $n\in[n_*+(i-1)g+1,n_*+ig]$, and let us prove that $\xi\in(i,i+1)$
for every $i=1,\ldots,\lfloor k/2\rfloor$. We have
\[
\xi\in(i,i+1)
\impliedby
\begin{cases}
k(\delta+4-2\delta(a+2+\delta))<0,\\
\ds k<\frac{1+2\delta}{\delta(a+2+\delta)-2}.\rule{0pt}{18pt}
\end{cases}
\]
For all $\delta\in(\delta_{-},\delta_{+})$
the coefficient of $k$ in the first line is negative
and the fraction in the second line
is greater than $a$, so both inequalities hold.
When $n=n^{*}$ we have
\begin{equation}\label{0510:eq003}
\xi\in(\lfloor k/2\rfloor,\lfloor k/2\rfloor+1)
\impliedby
\begin{cases}
k(2\delta(a+2+\delta)-4-\delta) > -2a-4,\\
\ds k < \frac{\delta}{2\delta(a+2+\delta)-4-\delta}.\rule{0pt}{18pt}
\end{cases}
\end{equation}
Since $2\delta(a+2+\delta)-4-\delta>0$
for all $\delta\in(\delta_{-},\delta_{+})$,
the first inequality holds.
As for the second one,
we verify that the fraction is greater than $a$ for all $\delta\in(\delta_{-},\delta_{+})$.

Consider now $n\in[n^{*}+(j-1)g+1,n^{*}+jg]$,
for $j=1,\ldots,\lceil k/2\rceil-1$. In these intervals we have
\[
\xi\in(\lfloor k/2\rfloor+j,\lfloor k/2\rfloor+j+1)
\impliedby
\begin{cases}
\ds k<\frac{1+\delta}{2+\delta-\delta(a+2+\delta)},\\
\ds k<\frac{3\delta}{2\delta(a+2+\delta)-4-\delta}.\rule{0pt}{18pt}
\end{cases}
\]
The second inequality follows from the second bound in \eqref{0510:eq003}.
In the first line, the fraction greater than $a$ for all $\delta\in(\delta_{-},\delta_{+})$.

Finally consider the last range, $n\in[n^{*}+g(\lceil k/2\rceil-1)+1,ka+k+\mathbf{1}_{\{k>a/2\}}]$
and let us prove that $\xi\in(k,k+1)$. We have
\[
\xi\in(k,k+1)
\impliedby
\begin{cases}
\ds k< \frac{1}{2+\delta-\delta(a+2+\delta)},\\
\ds k<\frac{a+1+\delta}{\delta(a+2+\delta)-1-\delta}\rule{0pt}{18pt}.
\end{cases}
\]
In both inequalities the right-hand side is greater than $a$ for all $\delta\in(\delta_{-},\delta_{+})$, from which the result follows.
\end{proof}

\subsection{Left line}
The edge of the triangle $k\mathcal{T}_a$ on the left of the vertical axis is a segment that goes roughly from the point $kP_1\approx(-k,k)$ to the point $kP_2\approx(0,0)$. Because of this, we expect that $\ell_{12}(n)\approx -n$. More precisely, we have the following.

\begin{lemma}\label{lemma:leftline}
Lt $a\geq 34$ and $k\leq a$. Then we have
\[
\lfloor\ell_{12}(n)\rfloor
=
\begin{cases}
-n & -k\leq n\leq -\lfloor k/2\rfloor -1\\
-n-1 & -\lfloor k/2\rfloor \leq n\leq -1.
\end{cases}
\]
\end{lemma}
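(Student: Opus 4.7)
The plan is to mirror the proof of Lemma~\ref{lemma:topline}. Vieta's formulas for $f_a$ give $\rho_1+\rho_2=a-\rho_3$ and $\rho_1\rho_2=1/\rho_3$, so by Lemma~\ref{lemma:bounds}
\[
\ell_{12}(x)=-(1+\delta)x-\frac{k}{a+1+\delta}.
\]
Writing $n=-m$ with $1\leq m\leq k$, this rearranges to $\ell_{12}(-m)=m+\eta(m)$, where $\eta(m)=\delta m - k/(a+1+\delta)$. The two cases in the statement are then equivalent to $\eta(m)\in[0,1)$ for $\lfloor k/2\rfloor+1\leq m\leq k$, and $\eta(m)\in[-1,0)$ for $1\leq m\leq\lfloor k/2\rfloor$.

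Setting $T=k/(\delta(a+1+\delta))$, each case reduces to a pair of inequalities in $m$: Case A amounts to $T\leq\lfloor k/2\rfloor+1$ together with $k<T+1/\delta$, while Case B amounts to $\lfloor k/2\rfloor<T$ together with $T\leq 1+1/\delta$. I would verify all four conditions for every $\delta\in(\delta_-,\delta_+)$ by substituting the explicit bounds of Lemma~\ref{lemma:bounds}, following the same $\impliedby$-style pattern used in Lemma~\ref{lemma:topline}.

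The main obstacle is the central pair $\lfloor k/2\rfloor<T\leq\lfloor k/2\rfloor+1$, which forces $T$ to lie in a specific unit interval. A short computation using Lemma~\ref{lemma:bounds} yields $\delta(a+1+\delta)=2-2/(2a+3)+O(1/a^2)$, and hence $T=k/2+k/(4a+4)+O(k/a^2)$. For $k\leq a$ the correction to $k/2$ is strictly positive and strictly below $1/2$, which handles the even and odd cases for $k$ uniformly. The outer two inequalities, which involve the much larger quantity $1/\delta\sim a/2$, are very loose for $k\leq a$ and impose no additional constraint.
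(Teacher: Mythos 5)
Your proposal is correct and follows essentially the same route as the paper: both use Vieta's formulas and Lemma~\ref{lemma:bounds} to write $\ell_{12}(n)=-n+\xi$ with $\xi=-n\delta-k/(a+1+\delta)$, and then verify that $\xi$ lies in $(0,1)$ or $(-1,0)$ on the two ranges by checking the endpoint inequalities against the explicit bounds $\delta\in(\delta_-,\delta_+)$. Your threshold $T=k/(\delta(a+1+\delta))$ and the expansion $T=k/2+k/(4a+6)+O(k/a^2)$ are just a (clean) repackaging of the same endpoint checks, correctly identifying that the sign change of $\xi$ happens between $n=-\lfloor k/2\rfloor-1$ and $n=-\lfloor k/2\rfloor$ for both parities of $k$.
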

\begin{proof}
By Lemma \ref{lemma:bounds}
and recalling that $\rho_1+\rho_2+\rho_3=a$
and $\rho_1\rho_2\rho_3=1$,
we can write
$\ell_{12}(n) = -n+ \xi$, where
\[
\xi = \xi(a,n,k) = -n\delta-\frac{k}{a+1+\delta}.
\]
Here $\delta\in(\delta_{-},\delta_{+})$,
with $\delta_{\pm}$ given in \eqref{rho3bounds}.
Consider first $n\in[-k,\ldots,-\lfloor k/2\rfloor-1]$ and let us show that $\xi\in(0,1)$.
We have
\[
\xi\in(0,1)
\impliedby
\begin{cases}
\ds k<\frac{\delta(a+1+\delta)}{2-\delta(a+1+\delta)},\\
k(2-\delta(a+1+\delta))>-(2+\delta)(a+1+\delta).\rule{0pt}{15pt}
\end{cases}
\]
The second inequality is trivially satisfied since $2-\delta(a+1+\delta)$ is positive for all $\delta\in(\delta_{-},\delta_{+})$.
The fraction in the first line is greater than $a$ for every $\delta\in(\delta_{-},\delta_{+})$,
hence the first inequality is satisfied as well.

Consider now $n\in[-\lfloor k/2\rfloor,-1]$
and let us show that in this range we have $\xi\in(-1,0)$.
We can write
\[
\xi\in(-1,0)
\impliedby
\begin{cases}
k<(1+\delta)(a+1+\delta),\\
k(2-\delta(a+1+\delta))>-\delta(a+1+\delta).
\end{cases}
\]
Since $k\leq a$ by assumption, the first inequality holds. In the second line, since the coefficient of $k$ is positive for every $\delta\in(\delta_{-},\delta_{+})$,
we see that the left-hand side is positive while the right-hand side is negative, so the inequality holds trivially.
\end{proof}

\subsection{Right line}
Now we consider the edge of the triangle $k\mathcal{T}_a$
associated to the line~$\ell_{23}$. Such a line connects the points
$kP_2\approx (0,0)$ and $kP_3\approx(k(a+1),k(a+1)^2)$.
Because of this, we expect that $\ell_{23}(x)\approx (a+1)x$.
While the slope is correct, the constant term needs to be adjusted
as follows.

\begin{lemma}\label{lemma:rightline}
Let $a\geq 34$ and $k\leq a$.
Define the quantities
\[
g = a+1,
\quad
n_{*} = k-1,
\quad
n^{*} = n_{*}+g\lceil k/2\rceil+1.
\]
Then we have
\[
\lfloor\ell_{23}(n)\rfloor
=
\begin{cases}
gn + n_{*} & 0\leq n\leq n_{*}\\
gn+ n_{*}+i & (i-1)g+1\leq n-n_{*}\leq ig,\; 1\leq i\leq \lceil k/2\rceil\\
gn+n_{*}+\lceil k/2\rceil & n=n^{*}\\
gn+n_{*}+\lceil k/2\rceil +j & (j-1)g+1\leq n-n^{*}\leq +jg,\; 1\leq j\leq \lfloor k/2\rfloor-1\\
gn+n_{*}+k & n^{*}+(\lfloor k/2\rfloor-1)g+1 \leq n\leq kg+\mathbf{1}_{\{k>a/2\}}.
\end{cases}
\]
\end{lemma}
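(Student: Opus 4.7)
The plan is to mirror the proof of Lemma~\ref{lemma:topline}. Using the symmetric function identities $\rho_1+\rho_2+\rho_3=a$ and $\rho_1\rho_2\rho_3=1$ together with~\eqref{relation}, the slope and intercept of $\ell_{23}$ reduce to expressions in $\rho_3$ alone:
\[
\rho_2+\rho_3 = a+1+\frac{1}{\rho_3},
\qquad
\rho_2\rho_3 = -\frac{\rho_3}{1+\rho_3}.
\]
Substituting $\rho_3 = a+1+\delta$ from Lemma~\ref{lemma:bounds} and rearranging, one obtains
\[
\ell_{23}(n) = gn + n_{*} + \xi,
\qquad
\xi = 1 + \frac{n}{a+1+\delta} - \frac{k}{a+2+\delta},
\]
so that $\lfloor\ell_{23}(n)\rfloor = gn + n_{*} + \lfloor\xi\rfloor$. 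The lemma therefore reduces to pinning down $\lfloor\xi\rfloor$ on each of the five subranges of~$n$.

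On each subrange I would argue exactly as in Lemma~\ref{lemma:topline}: for the candidate value $m\in\{0,\,i,\,\lceil k/2\rceil,\,\lceil k/2\rceil+j,\,k\}$, I rewrite the two inequalities $m<\xi<m+1$ as a pair of linear inequalities in~$k$ whose coefficients depend only on $a$ and $\delta$, and then verify that the coefficient of~$k$ has the correct sign for every $\delta\in(\delta_{-},\delta_{+})$ and that the resulting threshold on~$k$ exceeds~$a$. Combined with the standing hypothesis $k\leq a$ this closes each case. Most of the algebra collapses through the identity $\frac{1}{a+1+\delta}-\frac{1}{a+2+\delta} = \frac{1}{(a+1+\delta)(a+2+\delta)}$, leaving bounds of the same shape as \eqref{0510:eq001}--\eqref{0510:eq003}.

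The hard part will be the singleton range $n=n^{*}$, where the margin between $\xi$ and the nearest integer is smallest and one must invoke the explicit bounds $\delta_{-}<\delta<\delta_{+}$ from~\eqref{rho3bounds} rather than any coarser asymptotic for $\delta$. The top endpoint $n = kg + \mathbf{1}_{\{k>a/2\}}$ is also delicate, and its extra indicator term traces back to the two-case bound $k\rho_3 < k(a+1)+b$ with $b=1,2$ established in the previous lemma. Finally, because the formula swaps the roles of $\lfloor k/2\rfloor$ and $\lceil k/2\rceil$ relative to Lemma~\ref{lemma:topline}---reflecting that $\ell_{23}$ has a slightly steeper slope than $\ell_{13}$---the parity of~$k$ has to be tracked carefully at each of the transition points.
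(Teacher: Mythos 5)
Your proposal is correct and follows essentially the same route as the paper: you derive the identical decomposition $\ell_{23}(n)=gn+n_*+\xi$ with $\xi=1+\frac{n}{a+1+\delta}-\frac{k}{a+2+\delta}$ via the symmetric-function identities and Lemma~\ref{lemma:bounds}, and then verify $\lfloor\xi\rfloor$ range by range by reducing $m<\xi<m+1$ to linear inequalities in $k$ checked uniformly for $\delta\in(\delta_-,\delta_+)$, exactly as the paper does. The only difference is one of completeness, not of method: the paper writes out the explicit pair of inequalities for each of the five ranges, while you describe the pattern and flag the delicate cases ($n=n^*$ and the upper endpoint with the indicator $\mathbf{1}_{\{k>a/2\}}$) without carrying out the algebra.
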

\begin{proof}
By Lemma \ref{lemma:bounds}
and recalling that $\rho_1+\rho_2+\rho_3=a$
and $\rho_1\rho_2\rho_3=1$,
we can write
$\ell_{23}(n) = gn+n_{*}+ \xi$, where
\[
\xi = \xi(a,n,k) = 1+\frac{n}{a+1+\delta}-\frac{k}{a+2+\delta}.
\]
Here $\delta\in(\delta_{-},\delta_{+})$,
with $\delta_{\pm}$ as described in \eqref{rho3bounds}.
Let $n\in[0,n_*]$ and let us show that in this range $\xi\in(0,1)$. We can write
\[
0<\xi<1
\impliedby
\begin{cases}
k<a+2+\delta,\\
k<a+1+\delta.
\end{cases}
\]
Since $k\leq a$ by assumption,
the above holds.
Consider now $n\in[n_*+(i-1)g+1,n_*+ig]$
for $i=1,\ldots,\lceil k/2\rceil$, and let us show that $\xi\in(i,i+1)$ in these intervals.
We have
\[
\xi\in(i,i+1)
\impliedby
\begin{cases}
\ds k<\frac{\delta(a+2+\delta)}{\delta(a+2+\delta)-2},\\
k<(1+\delta)(a+2+\delta).\rule{0pt}{15pt}
\end{cases}
\]
The second inequality holds since $k\leq a$ by assumption.
In the first line 
the denominator is positive and the fraction is greater than $a$ for all $\delta\in(\delta_{-},\delta_{+})$,
therefore the inequality holds, again by $k\leq a$.
Next, when $n=n^{*}$ we have
\[
\xi\in(\lceil k/2\rceil,\lceil k/2\rceil+1)
\impliedby
\begin{cases}
\ds k<\frac{2(a+2+\delta)(2(a+1+\delta)-\delta)}{\delta(a+2+\delta)-2},\\
k(2-\delta(a+2+\delta))<0.\rule{0pt}{15pt}
\end{cases}
\]
In the second line the coefficient of $k$ is negative so the inequality holds.
The fraction in the first line is greater
than $a$ for every $\delta\in(\delta_{-},\delta_{+})$,
so the inequality holds.
In the range $n\in[n^{*}+(j-1)g+1,n^{*}+jg]$
with $j=1,\ldots,\lfloor k/2\rfloor-1$, we can have
\[
\xi\in(\lceil k/2\rceil+j,\lceil k/2\rceil+j+1)
\impliedby
\begin{cases}
\ds k<\frac{(1+2\delta)(a+2+\delta)}{\delta(a+2+\delta)-1},\\
k(2-\delta(a+2+\delta))<2\delta(a+2+\delta).\rule{0pt}{15pt}
\end{cases}
\]
Again, the coefficient of $k$ in the second line is negative so the inequality holds trivially.
In the first line the denominator is positive and the fraction is greater than $a$ for all $\delta\in(\delta_{-},\delta_{+})$,
so the inequality holds since we have $k\leq a$ by assumption.
Finally, in the range $n\in[n^{*}+(\lfloor k/2\rfloor-1)g+1\leq n\leq kg+\mathbf{1}_{\{k>a/2\}}]$, we can write
\[
\xi\in(k,k+1)
\impliedby
\begin{cases}
\ds k<\frac{(1+\delta)(a+2+\delta)}{\delta(a+2+\delta)-1},\\
\ds k>\frac{(a+2+\delta)\mathbf{1}_{\{k>a/2\}}}{a+1+\delta+\delta(a+2+\delta)}.\rule{0pt}{18pt}
\end{cases}
\]
The fraction in the first line is greater than $a$ for all $\delta\in(\delta_{-},\delta_{+})$,
so the inequality holds since $k\leq a$.
The fraction in the second line is smaller than $a/2$ for all $\delta\in(\delta_{-},\delta_{+})$,
hence the second inequality holds as well.
\end{proof}

\section{Proof of Theorem \ref{thm2}}\label{S3}

We can now collect the result obtained in
the previous section and prove Theorem~\ref{thm2}.
As we mentioned earlier, we count the number of points
in the triangle by slicing it along vertical lines
and then summing over these lines.
We use the shorthands
\[
\eta=\mathbf{1}_{\{k>a/2\}}
\quad\text{and}\quad
\eps_k = \mathbf{1}_{\{k\equiv 1\,(2)\}} = k-2\lfloor k/2\rfloor.
\]
We can write
\begin{equation}\label{2609:eq003}
\begin{split}
|\{k\mathcal{T}_a\cap \ZZ^2\}|
&=
\sum_{n=-k}^{k(a+1)+\eta} |k\mathcal{T}_a\cap \ZZ^2\cap \{x=n\}|
\\
&=
\sum_{n=-k}^{k(a+1)+\eta} \lfloor\ell_{13}(n)\rfloor
-
\sum_{n=-k}^{-1} \lfloor\ell_{12}(n)\rfloor
-
\sum_{n=0}^{k(a+1)+\eta} \lfloor\ell_{23}(n)\rfloor.
\end{split}
\end{equation}
Let us compute the sums separately. First we
claim that
\begin{equation}\label{2709:eq001}
\begin{split}
\sum_{n=-k}^{k(a+1)+\eta} \lfloor\ell_{13}(n)\rfloor
=&
\frac{1}{2}(k^2a^3+4k^2a^2+9k^2a+ka^2+ka+12k^2+3k)
\\
&+\eta(ka^2+2ka+3k+a)-\frac{\eps_k}{2}.
\end{split}
\end{equation}
To see this, we use Lemma \ref{lemma:topline} to write
\[
\begin{split}
\sum_{n=-k}^{k(a+1)+\eta} \lfloor\ell_{13}(n)\rfloor
&=
\sum_{n=-k}^{k(a+1)+\eta}(an+C)
+
\sum_{i=1}^{\lfloor k/2\rfloor}
    \sum_{n=n_*+(i-1)g+1}^{n_*+ig} \!\!\!\! i
\quad+\quad
\lfloor k/2\rfloor
\\
&\qquad+\;
\sum_{j=1}^{\lceil k/2\rceil-1}
    \sum_{n=n^*+(j-1)g+1}^{n^*+jg} \!\!\!\! \lfloor k/2 \rfloor+j
\quad+\!\!\!\!\!\!\!\!
\sum_{n=n^*+g(\lceil k/2\rceil-1)+1}^{k(a+1)+\eta} \!\!\!\! k,
\end{split}
\]
where $g=a+2$, $C=kg$, $n_*=a+1-2k$ and $n^*=n_*+g\lfloor k/2\rfloor+1$.
Therefore we obtain
\[
\begin{split}
\sum_{n=-k}^{k(a+1)} \lfloor\ell_{13}(n)\rfloor
={}&
\frac{1}{2}(k^2a^3+4k^2a^2+8k^2a+8k^2+(1+\eta)(ka^2+ka)+(4+2\eta)k+a\eta)
\\
&+
\frac{1}{8}(k^2a+2k^2+2ka+4k-\eps_k(2ka+a+4k+2))
+
\frac{k-\eps_k}{2}
\\
&+
\frac{1}{8}(3k^2a+6k^2-6ka-12k+\eps_k(2ka+a+4k+2))
+
k^2+\eta k.
\end{split}
\]
Simplifying the above we obtain \eqref{2709:eq001}.
Concerning the sum with $\ell_{12}$ in \eqref{2609:eq003}
we easily compute, using Lemma \ref{lemma:leftline},
\begin{equation}
\sum_{n=-k}^{-1} \lfloor\ell_{12}(n)\rfloor
=
-\sum_{n=-k}^{-1}n \;\;-\!\!\!\!\!\! \sum_{n=-\lfloor k/2\rfloor}^{-1} \!\!\! 1\;
=
\frac{k^2}{2} + \frac{\eps_k}{2}.
\end{equation}
Next, let us show that
\begin{equation}\label{2709:eq002}
\begin{split}
\sum_{n=0}^{k(a+1)+\eta} \lfloor\ell_{23}(n)\rfloor
=&
\frac{1}{2}(k^2a^3+3k^2a^2+6k^2a+ka^2+ka+2k^2+3k)
\\
&+
\eta(ka^2+2ka+3k+a)+\frac{\eps_k}{2}-1.
\end{split}
\end{equation}
By Lemma \ref{lemma:rightline} we can write
\[
\begin{split}
\sum_{n=0}^{k(a+1)+\eta} \lfloor\ell_{23}(n)\rfloor
={}&
\sum_{n=0}^{k(a+1)+\eta} (gn+n_*)
+
\sum_{i=1}^{\lceil k/2\rceil}
    \sum_{n=n_*+(i-1)g+1}^{n_*+ig} \!\!\!\! i
\quad+\quad
\lceil k/2\rceil
\\
&+
\sum_{j=1}^{\lfloor k/2\rfloor -1}
    \sum_{n=n^*+(j-1)g+1}^{n^*+jg}
        \!\!\!\! \lceil k/2\rceil +j
\qquad+\!\!\!\!\!\!\!\!
\sum_{n=n^*+g(\lfloor k/2\rfloor -1)+1}^{k(a+1)+\eta}
    \!\!\!\!\!\!\!\!\!\!\!\! (n_*+1),
\end{split}
\]
where $g=a+1$, $n_*=k-1$ and
$n^*=n_*+\lceil k/2\rceil g+1$.
Therefore we obtain
\[
\begin{split}
\sum_{n=0}^{k(a+1)+\eta} \lfloor\ell_{23}(n)\rfloor
={}&
\frac{1}{2}(k^2a^3+3k^2a^2+5k^2a+ka^2+3k^2+k-2)
\\
&+
\eta(ka^2+2ka+2k+a)
\\
&+
\frac{1}{8}(k^2a+2ka+k^2+2k+\eps_k(2ka+2k+3a+3))+\frac{k+\eps_k}{2}
\\
&+
\frac{1}{8}(3k^2a-6ka+3k^2-6k-\eps_k(2ka+2k+3a+3))
\\
&-\rule{0pt}{14pt}
k^2+ka+k+\eta k.
\end{split}
\]
Simplifying the terms we obtain \eqref{2709:eq002}.
By inserting \eqref{2709:eq001}--\eqref{2709:eq002}
in \eqref{2609:eq003}, we finally arrive at
\[
|\{(x,y)\in k\mathcal{T}_a\cap \ZZ^2\}|
=
\frac{k^2}{2}(a^2+3a+9)
+
1-\frac{3\eps_k}{2},
\]
which gives Theorem \ref{thm2} after noticing that the last two terms are equal to $\theta(k)$.


\end{document}